\newcommand{\nc}{\newcommand}
\nc{\bC}{\bold{C}} \nc{\bN}{\Bbb{N}} \nc{\cF}{\mathcal{F}}
\nc{\cE}{\mathcal{E}} \nc{\cR}{\mathcal{R}} \nc{\cM}{\mathcal{M}}
\nc{\al}{\alpha} \nc{\bt}{\beta} \nc{\gm}{\gamma} \nc{\dl}{\delta}
\nc{\om}{\omega} \nc{\sg}{\sigma} \nc{\Sg}{\Sigma} \nc{\vf}{\varphi}
\nc{\ve}{\varepsilon} \nc{\os}{\overset} \nc{\ol}{\overline}
\nc{\ul}{\underline} \nc{\us}{\underset} \nc{\sbs}{\subset}
\nc{\bsl}{\backslash} \nc{\Ra}{\Rightarrow}
\nc{\lra}{\longrightarrow} \nc{\all}{\allowdisplaybreaks}
\nc{\Codes}{\operatorname{{\bold{Codes}}}}
\nc{\RegMono}{\operatorname{\mathcal{R}{\rm{eg}\mathcal{M}{\rm{ono}\!}}}}
\nc{\RegEpi}{\operatorname{\mathcal{R}{\rm{eg}\mathcal{E}{\rm{pi}\!}}}}
\nc{\Mn}{\operatorname{\mathcal{M}{\rm{ono}\!}}}
\nc{\Ep}{\operatorname{\mathcal{E}{\rm{pi}\!}}}
\nc{\Rg}{\operatorname{\mathcal{R}{\rm{eg}\!}}}
\nc{\Ob}{\operatorname{Ob\!}}
\numberwithin{equation}{section}
\newtheorem{theo}{\ \ \ Theorem}[section]
\newtheorem{lem}[theo]{\ \ \ Lemma}
\newtheorem{prop}[theo]{\ \ \ Proposition}
\newtheorem{cor}[theo]{\ \ \ Corollary}
\theoremstyle{definition}
\newtheorem{exmp}[theo]{\ \ \ Example}
\theoremstyle{remark}
\begin{document}

\title[]
{On the number of integer non-negative solutions of a linear Diophantine equation}

\author{Eteri Samsonadze}

\maketitle

\begin{abstract}
We deal with the problem to find the  number $P(b)$ of integer non-negative solutions of an equation $\sum_{i=1}^{n} a_i x_i=b$, where $a_1,a_2,...,a_n$ are natural numbers and $b$ is a non-negative integer. As different from the traditional methods of investigation of the function $P(b)$, in our study we do not employ the techniques of number series theory, but use in the main the properties of the Kronecker function and the elements of combinatorics. The formula is derived to express $P(b)$, for an integer non-negative $b$, via $P(r), P(r+M),...,P(r+(s-1)M)$ when $s\neq 0$, where  
$s=\left[ n-\dfrac{\sum_{i=1}^{n}a_i+r}{M} \right]$ and takes quite small values in some particular cases; $M$ is the least common multiple of the numbers $a_1,a_2,\ldots,a_n$, and $r$ is the remainder of $b$ modulo $M$. Also, the recurrent formulas are derived to calculate $P(b)$, for any non-negative integer $b$, which, in particular, are used in finding $P(r), P(r+M),...,P(r+(s-1)M)$. For the case where $s=0$ and $a_1,a_2,...,a_n$ are coprime, the explicit formula $P(b)=\dfrac{M^{n-1}}{a_1a_2\ldots a_n}C^{n-1}_{\begin{tiny}\left[\dfrac{b}{M}\right]\end{tiny}+n-1}$ is given.  To illustrate the proposed method, examples of finding the function $P(b)$ for linear Diophantine equations with $2,3,7$ and $n$ variables are given.\vskip+3mm

\noindent{\bf Key words and phrases}: linear Diophantine equation; number of solutions; recurrent formula.

\noindent{\bf 2020  Mathematics Subject Classification}: 11D45, 11D04.
\end{abstract}

\section{Introduction}

In the present paper we study the problem of finding the number $P(b)$ of integer non-negative solutions of a linear Diophantine equation $$\sum_{i=1}^{n}a_ix_i=b,$$
\noindent where $a_1,a_2,...,a_n$ are natural numbers and $b$ is a non-negative integer. To this end we first study the interrelation of the numbers of integer non-negative solutions of arbitrary linear Diophantine equations with natural coefficients and one and the same number of variables. Applying the found interrelation, for the case  where the number $s$ defined by  
$$s=\left[ n-\dfrac{\sum_{i=1}^{n}a_i+r}{M} \right]$$
\noindent is different from $0$ we obtain the formula representing the number $P(b)$ via the numbers
 \begin{equation}
P(r), P(r+M),\ldots, P(r+(s-1)M),\end{equation} 
\noindent \noindent  where $M$ is the least common multiple of the numbers $a_1,a_2,\ldots,a_n$, and $r$ is the remainder of $b$ modulo $M$. It should be noted that applying a different method the formula representing  $P(b)$ via the numbers
$$P(r+M), P(r+2M), \ldots, P(r+nM)$$
 
\noindent is given in Riordan's book \cite{R}. Note also that $s\leq n-1$. Moreover, $s$ takes quite small values in some particular cases. 

In the paper we also give the recurrent formulas to calculate $P(b)$, for any non-negative integer $b$, which, in particular, are applied to find values (1.1).  

For the case where $s=0$ (which holds if, for instance, $r>(n-1)M-\sum_{i=1}^{n}a_i$) and the numbers $a_1,a_2,...,a_n$ are coprime (which can always be assumed without loss of generality), we give the explicit formula for $P(b)$:
 $$P(b)=\dfrac{M^{n-1}}{a_1a_2\ldots a_n}C^{n-1}_{\begin{tiny}\left[\dfrac{b}{M}\right]\end{tiny}+n-1}.$$



As different from the traditional methods of investigation of the function $P(b)$, in our study we do not employ the techniques of number series theory, but use in the main the properties of the Kronecker function and the elements of combinatorics. Moreover, the proposed method of finding the function $P(b)$ can be used even in the case where the numbers $a_1,a_2,...,a_n$ are not pairwise coprime. 

To illustrate the proposed method some examples of finding the number $P(b)$ in the case of linear Diophantine equations with $2,3,4,7$ and $n$ variables are given.



\section{Interrelationship of the numbers of integer non-negative solutions of some linear Diophantine equations}

In this section we consider the problem of determining the relationship between the numbers of integer non-negative 
solutions of linear Diophantine equations with one and the same number of variables. 

We denote by the symbol $P(f(x_1,x_2,...,x_n)=m)$ the number of integer non-negative solutions of an equation $f(x_1,x_2,...,x_n)=m$.

First, we represent the number of integer non-negative solutions  $P'(b)=P'(\sum_{i=1}^{n} a_i x_i=b)$ of the system 

$$\sum_{i=1}^{n} a_i x_i=b,$$
$$0\leq x_i \leq d_i-1 ~ (i=1,2,\cdots,n)$$

\noindent  via the values of the function $P(b)=P(\sum_{i=1}^{n} a_i x_i=b)$, where $n\geq 2$, $b$ is an integer non-negative number, $a_i\in \mathbb{N}$, $d_i\in \mathbb{N}$  $(i=1,2,\cdots,n)$.

\begin{lem} We have
\begin{equation}
P'(b)=P(b)-\sum_{i=1}^{n}P(b-a_id_i)+\sum_{1\leqslant i<j\leqslant n}^{n}P(b-a_id_i-a_jd_j)-$$
$$-\sum_{1\leqslant i<j<m\leqslant n}P(b-a_id_i-a_jd_j-a_m d_m)+\ldots +(-1)^{n}P(b-\sum_{i=1}^{n}a_id_i),
\end{equation}


\end{lem}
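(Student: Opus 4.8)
The plan is to prove the formula by inclusion–exclusion on the "overflow" constraints $x_i \le d_i - 1$. For each index $i$, let $A_i$ denote the set of integer non-negative solutions of $\sum_{k=1}^n a_k x_k = b$ that violate the bound on $x_i$, i.e. those with $x_i \ge d_i$. Then $P'(b)$ counts exactly the solutions lying in none of the $A_i$, so by the principle of inclusion–exclusion,
\[
P'(b) = \sum_{S \subseteq \{1,\dots,n\}} (-1)^{|S|} \, \bigl| \bigcap_{i \in S} A_i \bigr|,
\]
where the $S = \emptyset$ term is $P(b)$. The task then reduces to identifying each intersection term with a value of $P$.

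The key step is the substitution. First I would treat a single $A_i$: a solution with $x_i \ge d_i$ is, via $x_i' = x_i - d_i$, in bijection with a non-negative solution of $\sum_{k \ne i} a_k x_k + a_i x_i' = b - a_i d_i$, which is just the same equation with right-hand side shifted, so $|A_i| = P(b - a_i d_i)$ (with the convention that $P(m) = 0$ for $m < 0$, since then there are no non-negative solutions). More generally, for a subset $S$, a solution in $\bigcap_{i \in S} A_i$ satisfies $x_i \ge d_i$ for every $i \in S$; setting $x_i' = x_i - d_i$ for $i \in S$ and leaving the other coordinates fixed gives a bijection with the non-negative solutions of $\sum_{k=1}^n a_k x_k = b - \sum_{i \in S} a_i d_i$, whence $\bigl| \bigcap_{i \in S} A_i \bigr| = P\bigl(b - \sum_{i \in S} a_i d_i\bigr)$. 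Substituting this into the inclusion–exclusion sum and grouping the subsets $S$ by cardinality yields exactly the alternating sum in the statement: the cardinality-$1$ subsets give $-\sum_i P(b - a_i d_i)$, the cardinality-$2$ subsets give $+\sum_{i<j} P(b - a_i d_i - a_j d_j)$, and so on, ending with the single cardinality-$n$ subset contributing $(-1)^n P\bigl(b - \sum_{i=1}^n a_i d_i\bigr)$.

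The only point that needs a little care — and the main "obstacle," though it is minor — is the handling of the degenerate cases where $b - \sum_{i \in S} a_i d_i$ is negative. One must fix the convention $P(m) = 0$ for $m < 0$ at the outset and check it is consistent with both sides: on the left, $P'(b)$ is genuinely a count of solutions to a finite system, and on the right the inclusion–exclusion identity over the finite solution set of $\sum a_k x_k = b$ is valid term by term, with any term indexed by an $S$ for which $\bigcap_{i\in S}A_i$ is empty contributing $0$, matching $P(b - \sum_{i\in S} a_i d_i) = 0$. One should also note that the inclusion–exclusion is applied to a finite set (the solution set of $\sum a_k x_k = b$ is finite because the $a_k$ are natural numbers and $b$ is fixed), so there are no convergence issues. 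With these conventions in place, the derivation is a direct application of inclusion–exclusion and the shift bijection, and no further calculation is required.
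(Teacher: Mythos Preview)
Your proof is correct. The argument via inclusion--exclusion on the ``overflow'' sets $A_i=\{x : x_i\ge d_i\}$ together with the shift $x_i\mapsto x_i-d_i$ is exactly the right idea, and your remarks on the convention $P(m)=0$ for $m<0$ and on finiteness of the ambient solution set are appropriate.

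It is worth noting that the paper packages the same computation differently: it writes both $P$ and $P'$ as sums of Kronecker deltas, proves the one-variable identity
\[
\sum_{x=0}^{d-1}\delta(ax;b)=\sum_{x}\delta(ax;b)-\sum_{x}\delta(ax;b-ad)
\]
by splitting the infinite sum at $x=d$ and shifting, and then expands the $n$-fold bounded sum into an alternating sum of unbounded ones by induction on $n$. Your inclusion--exclusion is the set-theoretic counterpart of exactly this expansion (each $\beta_i\in\{0,1\}$ in the paper's final formula corresponds to your choice $i\in S$ or $i\notin S$), so the two proofs are mathematically equivalent. The paper's formulation has the advantage of fitting into the Kronecker-delta framework it uses throughout (and which feeds directly into the proof of Theorem~2.2), while your version is more self-contained and invokes a standard combinatorial principle by name rather than rederiving it inside the delta calculus.
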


\begin{proof}
It is not difficult to observe that
\begin{equation} 
P(\sum_{i=1}^{n} a_i x_i=b)=\sum_{x_1}\sum_{x_2}...\sum_{x_n}\delta(\sum_{i=1}^{n} a_i x_i;b)
\end{equation}
\noindent and
\begin{equation}
P'(\sum_{i=1}^{n} a_i x_i=b)=\sum_{x_1=0}^{d_1-1}\sum_{x_2=0}^{d_2-1}...\sum_{x_n=0}^{d_n-1}\delta(\sum_{i=1}^{n} a_i x_i;b),
\end{equation}
\noindent where $\delta(x;y)$ is the Kronecker symbol, while $\sum_{x_1}\sum_{x_2}...\sum_{x_n}$ denotes the summation over all integer non-negative $x_1,x_2,...,x_n$.

Since $$\sum_{x}\delta(ax;b)=\sum_{x=0}^{d-1}\delta(ax;b)+\sum_{x=d}^{\infty}\delta(ax;b)=$$
$$\sum_{x=0}^{d-1}\delta(ax;b)+\sum_{x}\delta(a(x+d);b)=$$
$$\sum_{x=0}^{d-1}\delta(ax;b)+\sum_{x}\delta(ax;b-ad),$$
\noindent for any natural $d$, we conclude that
$$\sum_{x=0}^{d-1}\delta(ax;b)=\sum_{x}\delta(ax;b)-\sum_{x}\delta(ax;b-ad).$$

Applying this and the principle of mathematical induction, it is not difficult to show that
$$\sum_{x_1=0}^{d_1-1}\sum_{x_2=0}^{d_2-1}...\sum_{x_n=0}^{d_n-1}\delta(\sum_{i=1}^{n} a_i x_i;b)=$$
$$\sum_{x_1}\sum_{x_2}...\sum_{x_n}(\delta(\sum_{i=1}^{n} a_i x_i;b)-
\Sigma_{k=1}^{n}\delta(\sum_{i=1}^{n}a_ix_i;b-a_kd_k)+$$
$$\sum_{1\leq k<j\leq n}\delta(\sum_{i=1}^{n}a_ix_i;b-a_kd_k-a_jd_j)-$$
$$\sum_{1\leq k <j<m\leq n}\delta(\sum_{i=1}^{n}a_ix_i;b-a_kd_k-a_jd_j-a_md_m)+...+$$
$$(-1)^{n}\delta(\sum_{i=1}^{n}a_ix_i;b-\sum_{i=1}^{n}a_id_i)),$$
\noindent for any natural $d_1,d_2,...,d_n$. Therefore (2.2) and (2.3) imply that 
$$P'(b)=\sum_{\beta_1=0}^{1}\sum_{\beta_2=0}^{1}...\sum_{\beta_n=0}^{1}(-1)^{\beta_{1}+\beta_{2}+\ldots \beta_{n}}P(b-\sum_{i=1}^{n}\beta_ia_i d_i).$$



\end{proof}

\begin{theo}
The following equality holds:
\begin{equation}
P(\sum_{i=1}^{n} a_i x_i=b)=\sum_{k=0}^{\overline{s}}m_k P(\sum_{i=1}^{n} c_i x_i=\left[  \dfrac{b}{M} \right] - k   ),
\end{equation}
\noindent where $n\geqslant 2$; $a_i, c_i\in \mathbb{N}$ $(i=1,2,...,n)$; $b$ is a non-negative integer, $M$ is the least common multiple of $a_1,a_2,...a_n$; $r$ is the remainder of $b$ modulo $M$,
\begin{equation}
\overline{s}=\left[\sum_{i=1}^{n} c_i-\dfrac{\sum_{i=1}^{n} a_i +r}{ M} \right],
\end{equation} 
$$m_0=P(r),m_k=P(r+kM)-\Sigma_{i=1}^{n} P(r+kM-Mc_{i})+$$
$$\sum_{1\leq i<j\leq n} P(r+kM-M(c_{i}+c_{j}))-\ldots +$$
\begin{equation}
(-1)^{k}\sum_{1\leq i_1<i_2<\ldots <i_k\leq n}P(r+kM-M(c_{i_1}+c_{i_2}+\ldots +c_{i_k}))
\end{equation}
\noindent ($k=1,2,\ldots ,\overline{s}$),
\noindent and 
$$P(l)=P(\sum_{i=1}^{n}a_ix_i=l).$$
\end{theo}

\begin{proof} 

It is not difficult to observe that $$\sum_x\delta(ax;b)=\sum_x \sum_{t=0}^{d-1}\delta(a(dx+t);b),$$\noindent for any natural $d$. Hence we have 
$$\sum_{x_{1}} \sum_{x_{2}} ...\sum_{x_{n}}\delta(\sum_{i=1}^{n} a_i x_i;b)=$$
\begin{equation}
\sum_{x_{1}} \sum_{x_{2}} ...\sum_{x_{n}} \sum_{t_{1}=0}^{d_1-1} \sum_{t_{2}=0}^{d_2-1} ...\sum_{t_{n}=0}^{d_n-1}\delta(\sum_{i=1}^{n} a_i(d_i x_i+t_i);b),
\end{equation}
\noindent for any natural $d_1,d_2,...,d_n$. Therefore (2.2) and (2.3) imply that
$$P(\sum_{i=1}^{n} a_i x_i=b)=$$
\begin{equation}
\sum_{t_{1}=0}^{d_1-1} \sum^{d_2-1}_{t_{2}=0} ...\sum^{d_n-1}_{t_{n}=0}P(\sum_{i=1}^{n}d_ia_ix_i=b-\sum_{i=1}^{n}a_it_i),
\end{equation}
\noindent for any natural $d_i$ $(i=1,2,...,n)$. Applying this equality for $d_i=\dfrac{M} {a_i}c_i$ $(i=1,2,...,n)$, we obtain

$$P(\sum_{i=1}^{n} a_i x_i=b)=$$
\begin{equation}
 \sum_{t_{1}=0}^{\alpha_1}\sum^{\alpha_2}_{t_2=0} ...\sum^{\alpha_n}_{t_n=0}P(\sum_{i=1}^{n} c_ix_i=\dfrac{b-\sum^{n}_{i=1}a_it_i}{M}).
\end{equation}
\noindent where $\alpha_i=\dfrac{Mc_i}{a_i}-1$ $(i=1,2,...,n)$.

Since $\dfrac{b-\sum^{n}_{i=1}a_it_i}{M}$ is an integer if and only if $\sum^{n}_{i=1}a_it_i=r+Mk$, for some integer $k$, and using $0\leq\sum^{n}_{i=1}a_it_i\leqslant M\sum^{n}_{i=1}c_i-\sum^{n}_{i=1}a_i$, we have $$0\leq k\leqslant \sum^{n}_{i=1}c_i-\dfrac{\sum^{n}_{i=1}a_i+r}{M}.$$
\noindent Further, since $\dfrac{b-(r+Mk)}{M}=\left[  \dfrac{b} {M} \right] -k$, (2.9) implies (2.4), where $m_k$ $(k=0,1,2,..., \overline{s})$ is the number of integer non-negative solutions of the system:
$$\sum_{i=1}^{n} a_i t_i=r+Mk,~~ 
0\leqslant t_i\leqslant \dfrac{M}{a_i}c_i-1 ~~(i=1, 2,...n).$$

Applying Lemma 2.1 for $d_i=\dfrac{M} {a_i}c_i$ $(i=1, 2,...n)$, we obtain
$$m_k=\sum_{\beta_1=0}^{1}\sum_{\beta_2=0}^{1}...\sum_{\beta_n=0}^{1}(-1)^{\beta_{1}+\beta_{2}+\ldots \beta_{n}}P(r+Mk-M\sum_{i=1}^{n}\beta_ic_i),$$
\noindent for any $k$ $(k=0,1,...,\overline{s})$.

Since $c_i\geq 1$ $(i=1, 2,...n)$ and $P(b)=0$ for $b<0$, we obtain (2.6).
\end{proof}

\section{On the function $P(b)=P(\sum_{i=1}^{n}a_ix_i=b)$}

Theorem 2.1 generalizes our earlier result \cite{S}, where the number $P(b)$ of integer non-negative solutions of an equation $\sum_{i=1}^{n} a_i x_i=b$ $(a_1, a_2,...,a_n\in \mathbb{N}$, $b$ is a non-negative integer) was represented via the numbers of integer non-negative solutions of certain linear Diophantine equations where all coefficients of variables are 1.

Applying (2.4), (2.5) and (2.6) with $c_i=1$ $(i=1,2,...,n)$ we obtain 
\begin{equation}
 P(\sum_{i=1}^{n}a_ix_i=b)=\sum_{k=0}^{s}l_kP(\sum_{i=1}^{n}x_i=\left[  \dfrac{b}{M}\right]-k),
\end{equation}

\noindent where 
\begin{equation}
s=\left[  n-\dfrac{\sum_{i=1}^{n}a_i+r}{M} \right],
\end{equation}
$$l_0=P(r), ~l_k=P(r+kM)-C_n^{1}P(r+(k-1)M)+C_n^{2}P(r+(k-2)M)+$$
\begin{equation}
...+(-1)^{k}C_n^{k}P(r), ~(k=1,2,..,s). 
\end{equation} .

Since, for any integer $d$, we have 
\begin{equation}
 P(\sum_{i=1}^{n}x_i=d)=\left\{
\begin{array}{llllllll}
C_{d+n-1}^{n-1}&if&d\geq 0,\\
0&otherwise,
\end{array}
\right.
\end{equation}
\noindent we obtain the following statement.

\begin{theo} We have the equality
\begin{equation}
P(\sum_{i=1}^{n}a_ix_i=b)=\sum_{k=0}^{s}l_k \overline{C}_{\left[  \dfrac{b}{M}\right] +n-1-k}^{n-1},
\end{equation}
\noindent where $n\geq 2$, $a_i\in \mathbb{N}$ $(i=1,2,...,n)$, $b$ is a non-negative integer, $M$ is the least common multiple of the numbers $a_1,a_2,...,a_n$, $r$ is a remainder of $b$ modulo $M$, $s$ and $l_k$ $(k=0,1,...,s)$ are respectively given by (3.2) and (3.3), while 
 
\begin{equation}
 \overline{C}_k^{m}=\left\{
\begin{array}{llllllll}
C_k^{m}&if& k\geq m,\\
0&otherwise,
\end{array}
\right.
\end{equation}
\end{theo}

\vskip+2mm

Formulas (3.5), (3.2) and (3.3) imply:
\begin{equation}
P(b)=c_0\left[  \dfrac{b}{M}\right] ^{n-1}+c_1\left[  \dfrac{b}{M}\right] ^{n-2}+\ldots +c_{n-2}\left[  \dfrac{b}{M}\right]+c_{n-1},
\end{equation}
\noindent where $c_0$, $c_1$,...,$c_{n-1}$ are numbers which do not depend on $\left[\dfrac{b}{M}\right]$. In this way
we obtain the known result from \cite{B} by which the function $P(b)=P(\sum_{i=1}^{n}a_ix_i=b)$ is a polynomial of degree $(n-1)$ with respect to $\left[  \dfrac{b}{M}\right] $.

Formula (3.7) implies that
\begin{equation}
c_{n-1}=P(r).
\end{equation}


\vskip+3mm

Without loss of generality one can assume that the coefficients $a_1,a_2,$ $\ldots,a_n$ in the equation $\sum_{i=1}^{n}a_ix_i=b$ are coprime. In that case, according to \cite{PS}, we have

\begin{equation}
\lim_{b\rightarrow \infty} \dfrac{P(b)}{b^{n-1}}=\dfrac{1}{a_1a_2\ldots a_n(n-1)!}.
\end{equation}

Since $\left[ \dfrac{b}{M}\right] =\dfrac{b-r}{M}$, equality (3.7) implies that

$$\lim_{b\rightarrow \infty} \dfrac{P(b)} {b^{n-1}}=\dfrac{c_0}{M^{n-1}}.$$

\noindent Hence from 
(3.9) we obtain the value of the leading coefficient of polynomial (3.7)
\begin{equation}
c_0=\dfrac{M^{n-1}}{ a_1a_2\ldots a_n (n-1)!},
\end{equation}
\noindent for the case of coprime $a_1,a_2,$ $\ldots,a_n$. 

Formulas (3.7), (3.8), (3.10) imply
\begin{prop}
If  $b\equiv 0$ $(mod$ $M)$ and $a_1,a_2,\ldots,a_n$ are coprime, then the function $P(b)=P(\sum_{i=1}^{n}a_ix_i=b)$ is a polynomial of degree $(n-1)$ with respect to $b$ with the leading coefficient $\dfrac{1}{ a_1a_2\ldots a_n (n-1)!}$ and the free coefficient $1$.
\end{prop}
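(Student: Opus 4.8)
The plan is simply to assemble the three facts recorded just before the statement, namely (3.7), (3.8) and (3.10), under the extra hypotheses of the Proposition. First I would use $b\equiv 0\ (\mathrm{mod}\ M)$: this forces $r=0$ and $\left[\dfrac{b}{M}\right]=\dfrac{b}{M}$, so substituting into (3.7) rewrites $P(b)$ as
\[
P(b)=\dfrac{c_0}{M^{\,n-1}}\,b^{\,n-1}+\dfrac{c_1}{M^{\,n-2}}\,b^{\,n-2}+\cdots+\dfrac{c_{n-2}}{M}\,b+c_{n-1},
\]
which is a genuine polynomial in $b$ of degree at most $n-1$ (here one uses that (3.7) is, as established, an honest identity valid for every non-negative $b$, not merely an asymptotic one).

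Next I would identify the leading coefficient. Since the coefficients $a_1,\dots,a_n$ are assumed coprime, formula (3.10) applies and gives $c_0=\dfrac{M^{\,n-1}}{a_1a_2\cdots a_n\,(n-1)!}$; hence the coefficient of $b^{\,n-1}$ in the display above equals $\dfrac{c_0}{M^{\,n-1}}=\dfrac{1}{a_1a_2\cdots a_n\,(n-1)!}$. As this number is strictly positive, the degree of the polynomial is exactly $n-1$.

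Finally, for the free coefficient I would read off from (3.8) that $c_{n-1}=P(r)$, and since $r=0$ this is $P(0)$, the number of non-negative integer solutions of $\sum_{i=1}^{n}a_ix_i=0$; as every $a_i\geq 1$, the only such solution is $x_1=\cdots=x_n=0$, so $P(0)=1$. Equivalently, evaluating the displayed polynomial at $b=0$ yields $P(0)=1$. This gives free coefficient $1$ and completes the argument.

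I do not expect a genuine obstacle here: the statement is essentially a repackaging of material already proved. The only points deserving a word of care are that (3.7) holds as a polynomial identity for all non-negative $b$ (this was secured when (3.7) was deduced from (3.5), (3.2), (3.3)), and that (3.10), which was obtained from the asymptotic relation (3.9) of \cite{PS}, legitimately pins down the coefficient of $b^{\,n-1}$ after the substitution $\left[\dfrac{b}{M}\right]=\dfrac{b}{M}$.
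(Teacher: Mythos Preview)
Your proposal is correct and matches the paper's own argument: the paper simply states that ``Formulas (3.7), (3.8), (3.10) imply'' the proposition, and you have spelled out precisely how, including the observation that $r=0$ gives $c_{n-1}=P(0)=1$. There is nothing to add.
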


Since (3.5) implies that $c_0=\dfrac{\sum_{i=0}^{n}l_i}{(n-1)!}$, from (3.10) we obtain that if $a_1,a_2,\ldots,a_n$ are coprime, we have

\begin{equation}
\sum_{i=0}^{s}l_i=\dfrac{M^{n-1}}{a_1a_2\ldots a_n}.
\end{equation}

This implies that for calculating $P(b)$ by formula (3.5), it suffices to know the values of $l_0,l_1,\ldots, l_{s-1}$.
\vskip+3mm

From (3.3) we obtain
$$\sum_{i=0}^{s}l_i=P(r+sM)+P(r+(s-1)M)(1-C_n^{1})+$$
$$P(r+(s-2)M)(1-C_n^{1}+C_n^{2})+...+$$\vskip+1mm
\begin{equation}
P(r)(1-C_n^{1}+C_n^{2}+...+(-1)^{s}C_n^{s}).
\end{equation}\vskip+2mm

As is well known (see, e.g. \cite{ESIa}),

$$ C_n^{0}-C_n^{1}+C_n^{2}+...+(-1)^{m}C_{n}^{m}=\left\{
\begin{array}{lll}
(-1)^{m}C_{n-1}^{m}& if & m\leq n-1,\\
0& otherwise.
\end{array}
\right.$$

Therefore (3.11) and (3.12) imply that if $a_1,a_2,\ldots a_n$ are coprime, then we have
$$P(r+sM)-C_{n-1}^{1}P(r+(s-1)M)+C_{n-1}^{2}P(r+(s-2)M)+$$
\begin{equation}
...+(-1)^{s}C_{n-1}^{s}P(r)=\dfrac{M^{n-1}}{a_1a_2\ldots a_n}.
\end{equation}\vskip+3mm

Since the function $P(b)$ is a polynomial of degree $(n-1)$ with respect to $\left[ \dfrac{b}{M} \right] $,  it can be found applying the interpolation formula provided that its values at $n$ different points are known. In this manner the formula which represents the value of $P(b)$ via the values of $$P(r+M), P(r+2M),\ldots P(r+nM).$$\noindent was obtained in \cite{R}.

However if $s\neq 0$, then (3.5), (3.2), and (3.11) enables us to represent $P(b)$ via $$P(r), P(r+M), P(r+2M),\ldots P(r+(s-1)M),$$ \noindent where $s\leqslant n-1$; at that the value of $s$ is quite small in some particular cases (for instance, if $\sum_{i=1}^{n}a_i>>M$).

 For instance, for the equation $$2x_1+3x_2+ 3x_3+3x_4+6x_5+6x_6+6x_7+6x_8+6x_9+6x_{10}=6005$$ \noindent we have $s=\left[10- \dfrac{47+5}{6}\right] =1$.

 But if $s=0$ and the numbers $a_1,a_2,...,a_n$ are coprime, then from (3.5) and (3.11) we obtain
 $$P(b)=l_0 C_{\left[  \dfrac{b}{M}\right] +n-1}^{n-1},$$
 \noindent where $l_0=\dfrac{M^{n-1}}{ a_1a_2\ldots a_n}$, and since according to (3.3), $l_0=P(r)$, 
we obtain the following statement.
\begin{theo}
If $\left[n- \dfrac{\sum_{i=1}^{n}a_i+r}{M} \right]=0$ and the numbers $a_1,a_2,...,a_n$ are coprime, then
\begin{equation} 
P(\sum_{i=1}^{n}a_ix_i=r)=\dfrac{M^{n-1}}{a_1a_2\ldots a_n}.
\end{equation}
\noindent and
\begin{equation} 
P(\sum_{i=1}^{n}a_ix_i=b)=\dfrac{M^{n-1}}{a_1a_2\ldots a_n}C^{n-1}_{
\left[\dfrac{b}{M}\right]+n-1}
\end{equation}
\end{theo}\vskip+8mm

Since $r> (n-1)M-\sum_{i=1}^{n} a_i$ implies that $s=\left[n- \dfrac{\sum_{i=1}^{n}a_i+r}{M} \right]=0$, from Theorem 3.3 we obtain the following assertion. 
\begin{cor}
If $r> (n-1)M-\sum_{i=1}^{n} a_i$ and the numbers $a_1,a_2,...,a_n$ are coprime, then the values of $P(\sum_{i=1}^{n}a_ix_i=b)$ and $P(\sum_{i=1}^{n}a_ix_i=r)$ can be found by formulas (3.15) and (3.14).
\end{cor}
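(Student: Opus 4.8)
The plan is to obtain Corollary 3.4 as an immediate consequence of Theorem 3.3: it is enough to verify that the hypotheses — namely $r>(n-1)M-\sum_{i=1}^{n}a_i$ and coprimality of $a_1,\dots,a_n$ — force $s=\left[n-\dfrac{\sum_{i=1}^{n}a_i+r}{M}\right]=0$, since then formulas (3.14) and (3.15) are precisely the conclusions of Theorem 3.3.

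First I would check $s\leqslant 0$. The assumption $r>(n-1)M-\sum_{i=1}^{n}a_i$ is the same as $\sum_{i=1}^{n}a_i+r>(n-1)M$, so $n-\dfrac{\sum_{i=1}^{n}a_i+r}{M}<1$ and hence $s\leqslant 0$; this step uses neither coprimality nor any estimate. For the reverse inequality $s\geqslant 0$ — equivalently $\sum_{i=1}^{n}a_i+r\leqslant nM$ — I would first prove that coprimality implies $\sum_{i=1}^{n}a_i\leqslant(n-1)M+1$; combined with $r\leqslant M-1$ this gives $\sum_{i=1}^{n}a_i+r\leqslant nM$, hence $s\geqslant 0$, and therefore $s=0$. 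The estimate follows by splitting on the number of indices $i$ with $a_i<M$: every proper divisor of $M$ is at most $M/2$, so if at least two of the $a_i$ are $<M$ then $\sum_{i=1}^{n}a_i\leqslant(n-1)M$; if exactly one is $<M$, then the $\gcd$ of the $a_i$ equals that single coefficient, which coprimality forces to be $1$, so $\sum_{i=1}^{n}a_i=(n-1)M+1$; and if none is $<M$, all $a_i$ equal $M$, whence coprimality gives $M=1$ and $\sum_{i=1}^{n}a_i=n=(n-1)M+1$.

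Having shown $s=0$, I would conclude by invoking Theorem 3.3, which delivers (3.14) and (3.15) directly. I expect the only non-routine point to be the bound $\sum_{i=1}^{n}a_i\leqslant(n-1)M+1$, and in particular the observation that coprimality is genuinely indispensable here: without it $s$ can be negative — for $a_1=a_2=2$ one has $M=2$ and, with $r=1$, the inequality $r>(n-1)M-\sum_{i=1}^{n}a_i=-2$ holds while $s=\left[2-\dfrac{5}{2}\right]=-1$ — so Theorem 3.3 could not be applied and the formulas would in fact fail. Everything else is routine arithmetic.
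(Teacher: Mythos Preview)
Your proof is correct and follows exactly the paper's approach: verify that the hypotheses force $s=0$ and then invoke Theorem~3.3. The paper's own argument is a single sentence asserting that $r>(n-1)M-\sum_{i=1}^{n}a_i$ implies $s=0$, without justifying the direction $s\geqslant 0$; your treatment is actually more careful here, supplying the bound $\sum_{i=1}^{n}a_i\leqslant(n-1)M+1$ under coprimality and noting (correctly, via your $a_1=a_2=2$ example) that coprimality is genuinely needed for this half.
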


Consider now the case where $n=2$. Formulas (3.7), (3.8) and (3.10) imply the Ehrhart's result \cite{E} by which we have that
\begin{equation}
P(a_1x_1+a_2x_2=b)=\left [\dfrac{b}{M}\right]+P(r)
\end{equation}
\noindent if $(a_1,a_2)=1$ and $r$ is the remainder of $b$ modulo $M$, and, moreover, that $P(r$) is equal to either $0$ or $1$. 

At that \cite{E}
\begin{equation}
 P(r)=\left\{
\begin{array}{llllllll}
0&if& r<a_1+a_2,&a_1\nmid r,&and&a_2\nmid r;\\
1&if&r=a_1+a_2,&or&a_1\mid r&or&a_2\mid r.
\end{array}
\right.
\end{equation}
Moreover, as follows from Corollary 3.4,
\begin{equation}
P(r)=1
\end{equation}
\noindent if $r>a_1a_2-(a_1+a_2)$.  

The equality
$$(a_1a_2-(a_1+a_2))-(a_1+a_2)=a_1(a_2-2)-2a_2$$
implies that if either $a_1\geqslant 4$ and $a_2\geqslant 4$, or $a_1=3$ and $a_2>6$, then $a_1+a_2\leqslant a_1a_2-(a_1+a_2)$. As is not difficult to verify, in all other cases $a_1a_2-(a_1+a_2)<a_1+a_2$, and if $r\in (a_1a_2-(a_1+a_2), a_1+a_2]$, then $P(r)=1$. Therefore (3.16), (3.17) and (3.18) imply


\begin{prop} 
Let $(a_1,a_2)=1$ and $1<a_1<a_2$. If either $a_1=2$, or $a_1=3$ and $a_2< 6$, then we have 
\begin{equation}
 P(a_1x_1+a_2x_2=b)=\left\{
\begin{array}{llllllll}
\left[\dfrac{b}{a_1a_2}\right]+1,&if&r>a_1a_2-(a_1+a_2);\\
&or&r=a_1+a_2;\\
&or&a_1\mid r;\\
&or &a_2\mid r;\\
\left[\dfrac{b}{a_1a_2}\right],&otherwise.
\end{array}
\right.
\end{equation}
\noindent where $r$ is the remainder of $b$ modulo $a_1a_2$.
\end{prop}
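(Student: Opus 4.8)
The plan is to specialize the general two-variable formula (3.16) together with the value (3.17) of $P(r)$ from Ehrhart's work and the strengthening (3.18) coming from Corollary 3.4, and then to reconcile the three stated sufficient conditions for $P(r)=1$ into a single case distinction. First I would recall that, under the hypothesis $(a_1,a_2)=1$, formula (3.16) reduces the computation of $P(a_1x_1+a_2x_2=b)$ to knowing $\left[\tfrac{b}{M}\right]=\left[\tfrac{b}{a_1a_2}\right]$ (since $M=a_1a_2$ when the two coefficients are coprime) and the single number $P(r)\in\{0,1\}$. Thus the whole proposition is equivalent to the assertion that, under the extra hypothesis that $a_1=2$, or else $a_1=3$ and $a_2<6$ (so $a_2=4$ or $a_2=5$), the set of remainders $r$ with $P(r)=1$ is exactly $\{r:\ r>a_1a_2-(a_1+a_2)\}\cup\{r=a_1+a_2\}\cup\{r:\ a_1\mid r\}\cup\{r:\ a_2\mid r\}$.

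Next I would carry out the interval comparison already sketched in the text: the identity $(a_1a_2-(a_1+a_2))-(a_1+a_2)=a_1(a_2-2)-2a_2$ shows that $a_1+a_2\le a_1a_2-(a_1+a_2)$ precisely when $a_1(a_2-2)\ge 2a_2$, which fails exactly in the small cases under consideration (for $a_1=2$ the left side is $0$; for $a_1=3$, $a_2<6$ it is $3a_2-6<2a_2$). Hence in these cases one has the reversed inequality $a_1a_2-(a_1+a_2)<a_1+a_2$, so the half-open interval $(a_1a_2-(a_1+a_2),\,a_1+a_2]$ is nonempty and, by (3.18), every $r$ in that interval satisfies $P(r)=1$; in particular $r=a_1+a_2$ gives $P(r)=1$, absorbing the ``$r=a_1+a_2$'' clause of (3.17) into the ``$r>a_1a_2-(a_1+a_2)$'' clause as far as the value $1$ is concerned. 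Combining this with the two divisibility clauses ``$a_1\mid r$'' and ``$a_2\mid r$'' from (3.17), and noting conversely that if none of the listed conditions holds then $r<a_1+a_2$ with $a_1\nmid r$ and $a_2\nmid r$, so (3.17) forces $P(r)=0$, one gets the stated two-case formula, with the $+1$ versus $+0$ governed exactly by the disjunction displayed.

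The only genuinely delicate point is the direction showing that $P(r)=0$ in the ``otherwise'' case: one must be sure that when $r\le a_1a_2-(a_1+a_2)$, $r\ne a_1+a_2$, $a_1\nmid r$, and $a_2\nmid r$, the first branch of (3.17) applies, i.e.\ that $r<a_1+a_2$. This is where the small-case hypothesis is used a second time: since $a_1a_2-(a_1+a_2)<a_1+a_2$ in these cases, $r\le a_1a_2-(a_1+a_2)$ already implies $r<a_1+a_2$ (and the excluded value $r=a_1+a_2$ never even occurs in this range), so (3.17) indeed yields $P(r)=0$. I expect the bookkeeping of merging the four ``$P(r)=1$'' triggers and checking they are jointly exhaustive against the ``$P(r)=0$'' criterion to be the main obstacle, but it is entirely elementary once the interval inequality $a_1a_2-(a_1+a_2)<a_1+a_2$ is in hand; substituting $M=a_1a_2$ into (3.16) then gives the displayed formula (3.20).
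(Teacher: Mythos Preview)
Your proposal is correct and follows essentially the same route as the paper: reduce via (3.16) to the determination of $P(r)\in\{0,1\}$, then combine the criteria (3.17) and (3.18), the crucial observation being that in the stated small cases one has $a_1a_2-(a_1+a_2)<a_1+a_2$, so that these criteria together cover every remainder $r$ and the ``otherwise'' branch forces $r<a_1+a_2$ with $a_1\nmid r$, $a_2\nmid r$, hence $P(r)=0$. (One minor slip: for $a_1=2$ the quantity $a_1(a_2-2)$ is $2a_2-4$, not $0$, and the difference $a_1(a_2-2)-2a_2$ equals $-4$; this does not affect the argument.)
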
 
\vskip+3mm

\section{Recurrent formulas for the number of integer non-negative solutions of a linear Diophantine equation}

Applying the formulas given in Sections 2-3, we obtain
\begin{theo} We have
\begin{equation}
\sum_{k=0}^{a_1+a_2+...+a_n-n}l'_kP(\sum_{i=1}^{n}a_ix_i=b-k)=C^{n-1}
_{b+n-1}
\end{equation}
\noindent where 
$$l'_0=1,~ l'_k=C_{n-1+k}^{n-1}-\sum_{i=1}^{n}\overline{C}_{n-1+k-a_i}^{n-1}+\sum_{1\leq i< j\leq n}\overline{C}_{n-1+k-a_i-a_j}^{n-1}-$$

$$\sum_{1\leq i<j<m\leq n}\overline{C}_{n-1+k-a_i-a_j-a_m}^{n-1}+...$$
 \begin{equation}
(-1)^{n} \overline{C}_{n-1+k-\sum_{i=1}^{n}a_{i}}^{n-1},
 \end{equation}
 \noindent $(k=1,2,...,\sum_{i=1}^{n}a_i-n)$ and $\overline{C}^{m}_k$ is given by (3.6).
\end{theo}

\begin{proof} 
Using formula (2.8) for $a_i=1$ $(i=1,2,\ldots , n)$, we obtain
$$P(\sum_{i=1}^{n}x_i=b)=\sum_{t_1=0}^{d_1-1}\sum_{t_2=0}^{d_2-1}\ldots \sum_{t_n=0}^{d_n-1}P(\sum_{i=1}^{n}d_ix_i=b-\sum_{i=1}^{n}t_i),$$
\noindent for any natural $d_1,d_2,...,d_n$. This combined with formula (3.4) implies
\begin{equation}
\sum_{t_1=0}^{a_1-1}\sum_{t_2=0}^{a_2-1}\ldots \sum_{t_n=0}^{a_n-1}P(\sum_{i=1}^{n}a_ix_i=b-\sum_{i=1}^{n}t_i)=C_{b+n-1}^{n-1},
\end{equation}

\noindent whence we obtain (4.1), where $l'_k$ $(k=0,1,...,\sum_{i=1}^{n}a_i-n)$ is the number of integer non-negative solutions of the system 
$$\sum_{i=1}^{n}t_i=k,$$
$$0\leqslant t_i\leqslant a_i-1, (i=1,2\ldots ,n).$$

Equality (2.1) implies that
$$l'_k=\sum_{\beta_1=0}^{1}\sum_{\beta_2=0}^{1}\ldots \sum_{\beta_n=0}^{1}(-1)^{\beta_1+\beta_2+\ldots +\beta_n}P(\sum_{i=1}^{n}x_i=k-\sum_{i=1}^{n}\beta_i a_i),$$
\noindent for all $k=0,1,...,s$. This together with formula (3.4) gives: 
$$l'_k=\sum_{\beta_1=0}^{1}\sum_{\beta_2=0}^{1}\ldots \sum_{\beta_n=0}^{1}(-1)^{\beta_1+\beta_2+\ldots +\beta_n}\overline{C}^{n-1}_{k-\sum_{i=1}^{n}\beta_ia_i+n-1}.$$
\end{proof} 

Formula (4.1) implies the recurrent formula for $P(b)=P(\sum_{i=}^{n}a_ix_i=b),$ for any non-negative integer $b$. Namely, we have
\begin{cor}
\begin{equation}
P(\sum_{i=1}^{n}a_ix_i=b)=C_{b+n-1}^{n-1}-\sum_{k=1}^{a_1+a_2+...+a_n-n}l'_kP(\sum_{i=1}^{n}a_ix_i=b-k),
\end{equation}
\noindent where $l'_k$ $(k=1,2,...,\Sigma_{i=1}^{n}a_i-n)$ can be found by formula (4.2).
\end{cor}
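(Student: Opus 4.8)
The plan is to read the corollary straight off Theorem~4.1. Formula~(4.1) asserts that $\sum_{k=0}^{a_1+\cdots+a_n-n} l'_k\, P(\sum_{i=1}^{n}a_ix_i=b-k)=C^{n-1}_{b+n-1}$, and the first clause of~(4.2) records that $l'_0=1$. So the only step is to peel off the $k=0$ summand, which is exactly $P(\sum_{i=1}^{n}a_ix_i=b)$, and transpose the remaining terms $\sum_{k=1}^{a_1+\cdots+a_n-n} l'_k\, P(\sum_{i=1}^{n}a_ix_i=b-k)$ to the other side. This yields~(4.4) verbatim, the coefficients $l'_k$ for $k\geq 1$ being the ones given by the second clause of~(4.2). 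Nothing further is required for the displayed identity itself.

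A second short paragraph would justify calling~(4.4) a recurrent formula. Every term on the right-hand side with $k\geq 1$ evaluates $P$ at the argument $b-k<b$; together with the standing convention $P(\sum_{i=1}^{n}a_ix_i=\ell)=0$ for $\ell<0$ used throughout the paper, this shows that~(4.4) expresses $P(b)$ through the finitely many values $P(b-1),\dots,P(b-(a_1+\cdots+a_n-n))$, all lying strictly below $b$. The recursion bottoms out at $b=0$, where~(4.4) correctly returns $P(0)=C^{n-1}_{n-1}=1$, since then every $b-k$ with $k\geq1$ is negative.

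Since Theorem~4.1 is already established, I expect no genuine obstacle; the one point worth a line of checking is that the two descriptions of $l'_0$ are consistent, i.e.\ that the alternating-sum expression in~(4.2), evaluated at $k=0$, also equals $1$. This is immediate: the leading term is $C^{n-1}_{n-1}=1$, while every subsequent term has lower index $n-1-(a_{i_1}+\cdots+a_{i_j})<n-1$ because each $a_i\geq1$, and hence vanishes by~(3.6).
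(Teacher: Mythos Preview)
Your proposal is correct and matches the paper's own treatment: the paper simply states that ``Formula~(4.1) implies the recurrent formula for $P(b)$'' and records the corollary without further argument, so peeling off the $k=0$ term (using $l'_0=1$) and transposing is exactly what is intended. Your additional remarks on the recursion bottoming out and on the consistency of $l'_0$ with~(4.2) are valid elaborations, not required by the paper but harmless to include.
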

\vskip+3mm

In a similar manner one can prove the following recurrent formula for $P(b)$.
\begin{theo} We have
\begin{equation}
P(b)=\sum_{k=0}^{s^{*}}l^{*}_k P(\left[ \dfrac{b}{m}\right] -k),
\end{equation}

\noindent where $m$ is an arbitrary natural number,
$$s^{*}=\left[ \dfrac{(m-1)\sum_{i=1}^{n}a_i -r)} {m}\right],$$
\noindent $r$ is the remainder of $b$ modulo $m$, and $l^{*}_k$ $(k=0,1,2,\ldots ,s*)$ is the number of integer non-negative solutions of the system 
$$\sum_{i=1}^{n}a_it_i=r+km,$$
$$0\leqslant t_i\leqslant m-1, (i=1,2,\ldots ,n).$$
\end{theo}
\begin{proof}
Applying (2.8) for $d_i=m$ $(i=1,2,...,n)$ we obtain:
\begin{equation}
P(\sum_{i=1}^{n}a_ix_i=b)=\sum_{t_1=0}^{m-1}\sum_{t_2=0}^{m-1}\ldots \sum_{t_n=0}^{m-1}P(\sum_{i=1}^{n}a_ix_i=\dfrac{b-\sum_{i=1}^{n}a_it_i}{m}).
\end{equation}
Since $\dfrac{b-\sum_{i=1}^{n}t_i}{m}$ is an integer if and only if $\sum_{i=1}^{n}a_it_i=r+km$ for some integer $k$, we obtain that $$0\leqslant r+km\leq \sum_{i=1}^{n}a_i(m-1)$$ and $$0\leq k\leq\dfrac{(m-1)\sum_{i=1}^{n}a_i-r}{m}.$$
\noindent And since $$\dfrac{b-(r+mk)}{m}=\left[\dfrac{b}{m}\right]-k,$$ \noindent equality (4.6) implies (4.5).
\end{proof}\vskip+6mm

In particular, from (4.5) for $m=2$ we obtain  the following statement.
\begin{cor} We have
\begin{equation}
 P(b)=\left\{
\begin{array}{lllll}
\sum_{k=0}^{s_1}P^{*}(2k)P(\dfrac{b}{ 2} -k)& if & b & is & even,\\
\\
\sum_{k=0}^{s_2}P^{*}(2k+1)P(\left[ \dfrac{b}{ 2}\right]  -k) & if & b & is & odd,
\end{array}
\right.
\end{equation}
\noindent where $$s_1=\left[\dfrac{\sum_{i=1}^{n}a_i}{ 2}\right],~ s_2=\left[\dfrac{\sum_{i=1}^{n}a_i-1}{ 2}\right],$$ 
\noindent and $P^{*}(d)$ denotes the number of integer non-negative solutions of the system 
$$\sum_{i=1}^{n}a_it_i=d,$$ 
$$0\leqslant t_i\leqslant 1, (i=1,2,\ldots,n).$$ 
\end{cor}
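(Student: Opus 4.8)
The plan is to specialize Theorem~4.3 to $m=2$ and then separate the resulting identity according to the parity of $b$. First I would set $m=2$ in formula (4.5). The remainder $r$ of $b$ modulo $2$ equals $0$ when $b$ is even and $1$ when $b$ is odd, so the upper summation limit $s^{*}=\left[\frac{(m-1)\sum_{i=1}^{n}a_i-r}{m}\right]$ specializes to $\left[\frac{\sum_{i=1}^{n}a_i}{2}\right]=s_1$ in the even case and to $\left[\frac{\sum_{i=1}^{n}a_i-1}{2}\right]=s_2$ in the odd case, which are exactly the two bounds appearing in the statement. Moreover $\left[\frac{b}{2}\right]=\frac{b}{2}$ when $b$ is even, which is why the even branch of the claimed formula may be written with $P(\frac{b}{2}-k)$ rather than $P(\left[\frac{b}{2}\right]-k)$.

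Next I would match the coefficients $l^{*}_k$ of Theorem~4.3 with the numbers $P^{*}(\cdot)$. According to Theorem~4.3, for $m=2$ the coefficient $l^{*}_k$ is the number of integer non-negative solutions of $\sum_{i=1}^{n}a_it_i=r+2k$ subject to $0\le t_i\le m-1=1$. Comparing this with the definition of $P^{*}(d)$ as the number of integer non-negative solutions of $\sum_{i=1}^{n}a_it_i=d$ with $0\le t_i\le 1$, one reads off $l^{*}_k=P^{*}(2k)$ when $r=0$ and $l^{*}_k=P^{*}(2k+1)$ when $r=1$. Substituting these values into (4.5) produces precisely the two cases of the asserted identity.

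I do not expect any genuine obstacle here: the argument merely extracts the $m=2$ instance of the already-established Theorem~4.3 and rewrites its coefficients in the notation $P^{*}$. The only point requiring a little care is the parity case split, since passing from $b$ even to $b$ odd changes $r$ from $0$ to $1$ — hence shifts the argument of $P^{*}$ from $2k$ to $2k+1$ — and simultaneously changes the summation bound from $s_1$ to $s_2$; keeping both changes consistent is the entire content of the bookkeeping.
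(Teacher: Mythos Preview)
Your proposal is correct and is exactly the approach the paper takes: the corollary is stated as the specialization of formula~(4.5) (Theorem~4.3) at $m=2$, and your parity bookkeeping for $r$, $s^{*}$, and $l^{*}_k=P^{*}(r+2k)$ reproduces precisely the two cases claimed.
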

\vskip+3mm

To calculate $P^{*}(d)$ $(d\in \mathbb{N})$ one can apply the following recurrent formula
\begin{equation}
P^{*}_{n}(d)=P^{*}_{n-1}(d)+P^{*}_{n-1}(d-a_n),
\end{equation}
\noindent where $P^{*}_n(d)=P^{*}(d)$ and $P^{*}_{n-1}(c)$ $(c\in \mathbb{Z})$ denotes the number of integer non-negative solutions of the system
$$\sum_{i=1}^{n-1}a_it_i=c,$$ 
$$0\leq t_i\leq 1, ~(i=1,2,\ldots,n-1),$$

\noindent and also take into account the fact that if $c>\sum_{i=1}^{n-1}a_i$ or $c<0$, then $P^{*}_{n-1}(c)=0$.

\section{Examples}
In this section, employing the method described above we give examples of finding the number of integer non-negative solutions of a linear Diophantine equation. 

Note that for a Diophantine equation $\sum_{i=1}^{3}a_ix_i=b$ with pairwise coprime coefficients $a_1,a_2,a_3$, the formula is given in \cite{E} which represents the number $P(b)$ via $P(r)$, where $r$ is the remainder of $b$ modulo $a_1a_2a_3$. In this paper we deal with Diophantine equations where coefficients are not necessarily pairwise coprime.

\begin{exmp}
Find the number $P(b)$ of integer non-negative solutions of the equation
$$2x_1+4x_2+5x_3=b,$$
\noindent where $b$ is an integer non-negative number.

To this end we apply formulas (3.5), (3.2) and (3.3). Here $n=3$, $\sum_{i=1}^{3}a_i=11$, $M=20$, 
$$s=\left[3-\dfrac{11+r}{20}\right] =\left\{
\begin{array}{llllllll}
2&if&r\leq 9,
\\
1&if&r\geq 10.
\end{array}
\right.$$
\noindent where $r$ is the remainder of $b$ modulo $20$.

Since the coefficients of the equation are coprime, according to formula (3.11), $\sum_{i=0}^{2}l_i=10$ (at that, if $r\geqslant 10$, then $l_2=0$). Therefore, 
\begin{equation}
P(b)=l_0C_{b'+2}^{2}+l_1\overline{C}_{b'+1}^{2}+l_2\overline{C}_{b'}^{2},
\end{equation}
\noindent where $b'=\left[\dfrac{b}{20}\right]$,
\begin{equation}
l_0=P(r), ~l_1=P(r+20)-3P(r), ~l_2=10-l_0-l_1.
\end{equation}

To find $P(r)$ and $P(r+20)$ we apply formulas (4.4) and (4.2). We have $\Sigma_{i=1}^{n}a_i-n=8$. Moreover, $\overline{C}_l^{3}=0$ for $l<3$. Therefore, for any non-negative integer $d$ we obtain:
$$P(d)=C_{d+2}^{2}-\sum_{k=1}^{8}l'_{k}P(d-k),$$
\noindent where 
$$l'_k=C_{k+2}^{2}-(\overline{C}_k^{2}+\overline{C}_{k-2}^{2}+\overline{C}_{k-3}^{2})+(\overline{C}_{k-4}^{2}+\overline{C}_{k-5}^{2}), ~(k=1,2,...,s).$$
We get
$$l'_1=3, l'_2=5, l'_3=7, l'_4=8,l'_5=7, l'_6=5, l'_7=3, l'_8=1,$$
$$P(1)=0, P(2)=1, P(3)=0, P(4)=2, P(5)=1, P(6)=2, P(7)=1,$$
$$ P(8)=3, P(9)=2, P(10)=4, P(11)=2, P(12)=5, P(13)=3,$$
$$ P(14)=6, P(15)=4, P(16)=7, P(17)=5, P(18)=8, P(19)=6,$$
$$P(20)=10, P(21)=7, P(22)=11, P(23)=8, P(24)=13, P(25)=10,$$
$$P(26)=14, P(27)=11, P(28)=16, P(29)=13.$$
\vskip+2mm

Substituting these values in formula (5.2), one can find, for each $r=0,1,...,19$, the corresponding values of $l_0, l_1$ and $l_2$. The found values are given by the table below.

\begin{table}[ht]
\centering
\begin{tabular}{|c|cccccccccccc|}
\hline $r$&0;5&1&2;7&3&4;9&6;11&8;13&10;15&12;17&14;19&16&18\\
\hline $l_0$&1&0&1&0&2&2&3&4&5&6&7&8\\
 $l_1$&7&7&8&8&7&8&7&6&5&4&3&2\\
 $l_2$&2&3&1&2&1&0&0&0&0&0&0&0\\
\hline
\end{tabular}
\end{table}

Substituting these values in formula (5.1), one can find the value of the function $P(b)$, for any integer non-negative $b$. 

For instance, $P(214)=616$, since $\left[\dfrac{214}{20}\right]=10$, $r=14$, $l_0=6$, $l_1=4$, $l_2=0$, and $P(214)=6C_{12}^{2}+4C_{11}^{2}$.
\vskip+2mm

\end{exmp}
\vskip+2mm
\begin{exmp}
Find the number $P(826)$ of integer non-negative solutions of the equation
$$2x_1+3x_2+4x_3+6x_4=826.$$

We apply formulas (3.5), (3.2), (3.3), and (3.11).

 Here $n=4$, $\sum_{i=1}^{4}a_i=15$, $M=12$, $b=826$, $\left[\dfrac{b}{M}\right]=68$, $r=10$, $s=\left[4-\dfrac{15+10}{12}\right]=1$, $l_0+l_1=12$. Hence
\begin{equation}
P(826)=l_0C_{71}^{3}+l_1C_{70}^{3},
\end{equation}
\noindent where 
$$l_0=P(10), ~l_1=12-P(10).$$
As it is not difficult to observe, $P(10)=7$. Therefore $l_0=7$, $l_1=5$. From (5.3) we obtain

$$P(826)=7C_{71}^{3}+5C_{70}^{3}=673785.$$






\end{exmp}\vskip+2mm
\begin{exmp}
 Find the number $P(b)$ of integer non-negative solutions of the equation
$$2x_1+2x_2+3x_3+3x_4+3x_5+6x_6+6x_7=b,$$
\noindent where $b$ is an integer non-negative number. 

To this end we apply formulas (3.5), (3.3), (3.2), and (3.11). Here $n=7$, $\sum_{i=1}^{7}a_i=25$, $M=6$, $s=\left[7-\dfrac{25+r}{6}\right]=2$, $r$ is the remainder of $b$ modulo $6$,  $\sum_{i=0}^{2}l_i=12$. Hence
\begin{equation}
P(b)=l_0C_{b'+6}^{6}+l_1\overline{C}_{b'+5}^{6}+l_2\overline{C}_{b'+4}^{6},
\end{equation}
 \noindent where $b'=\left[\dfrac{b}{6}\right]$ and
\begin{equation}
l_0=P(r),~ l_1=P(r+6)-7P(r), ~l_2=12-l_0-l_1.
\end{equation}

Since $r+6\leq 11$, for finding $l_0$, $l_1$ and $l_2$ it suffices to find $P(c)$ for all $c\leq 11$.

To calculate $P(r)$ and $P(r+6)$ we apply recurrent formulas (4.7) and (4.8). As a result we obtain:\vskip+2mm

$P^{*}(0)=1,P^{*}(1)=0,P^{*}(2)=2,P^{*}(3)=3,P^{*}(4)=1,P^{*}(5)=6,P^{*}(6)=5,P^{*}(7)=3,P^{*}(8)=10,P^{*}(9)=7,P^{*}(10)=5,P^{*}(11)=14.$\vskip+2mm
Therefore, taking the fact that $P(c)=0$  for $c<0$ into account, we obtain that for even $c$ we have
$$P(c)=P(c')+2P(c'-1)+P(c'-2)+5P(c'-3)+10P(c'-4)+5P(c'-1),$$
while for odd $c$ we have
$$P(c)=3P(c'-1)+6P(c'-2)+3P(c'-3)+7P(c'-4)+14P(c'-5),$$
\noindent where $c'=\left[\dfrac{c}{2}\right]$.

Since $P(0)=1$, we obtain: 
\vskip+2mm
$P(1)=0,P(2)=2,P(3)=3,P(4)=3,P(5)=6,P(6)=12,P(7)=9,P(8)=21,P(9)=28,P(10)=30,P(11)=47.$\vskip+2mm
Substituting these values in equality (5.4) we can find $l_0$, $l_1$, and $l_2$ for $r=0, 1, 2, 3, 4, 5$.

Next, substituting, for each $r=0,1,...,5$, the corresponding values of $l_0$, $l_1$ and $l_2$ in formula (5.4), we obtain:

$$P(b)=\left\{
\begin{array}{llllllll}
C_{b'+6}^6~+~5\overline{C}_{b'+5}^{6}+6\overline{C}_{b'+4}^{6}&if&r=0,
\\
9\overline{C}_{b'+5}^6+3\overline{C}_{b'+4}^{6}&if&r=1,
\\
2C_{b'+6}^6+7\overline{C}_{b'+5}^{6}+3\overline{C}_{b'+4}^{6}&if&r=2,
\\
3C_{b'+6}^6+7\overline{C}_{b'+5}^{6}+2\overline{C}_{b'+4}^{6}&if&r=3,
\\
3C_{b'+6}^6+9\overline{C}_{b'+5}^{6}&if&r=4,
\\
6C_{b'+6}^6+5\overline{C}_{b'+5}^{6}+\overline{C}_{b'+4}^{6}&if&r=5.
\end{array}
\right.$$

For instance, $$P(49)=9C_{13}^{6}+3C_{12}^{6}=18216.$$\vskip+2mm
\end{exmp}
\begin{exmp}
 Find the number $P(b)$ of integer non-negative solutions of the equation
$$x_1+2x_2+3x_3+4x_4+5x_5+6x_6+7x_7=101.$$

Apply the recurrent formulas (4.7) and (4.8). Since $\sum_{i=1}^{7}a_i=28$, we need to find $P^{*}(c)$ for all $c\leq 28$. Applying formula (4.8), we obtain:\vskip+2mm

$P^{*}(0)=1$, $P^{*}(1)=1$, $P^{*}(2)=1$, $P^{*}(3)=2$, $P^{*}(4)=2$, $P^{*}(5)=3$, $P^{*}(6)=4$, $P^{*}(7)=5$, $P^{*}(8)=5$, $P^{*}(9)=6$, $P^{*}(10)=7$, $P^{*}(11)=7$, $P^{*}(12)=8$, $P^{*}(13)=8$, $P^{*}(14)=8$,
$P^{*}(15)=8$, $P^{*}(16)=8$, $P^{*}(17)=7$, $P^{*}(18)=7$, $P^{*}(19)=6$, $P^{*}(20)=5$, $P^{*}(21)=5$, $P^{*}(22)=4$, $P^{*}(23)=3$, $P^{*}(24)=2$, $P^{*}(25)=2$, $P^{*}(26)=1$, $P^{*}(27)=1$, $P^{*}(28)=1$.\vskip+2mm

Therefore from (4.7) we can conclude that if $c$ is even, then
$$P(c)=P(c')+P(c'-1)+2P(c'-2)+4P(c'-3)+$$
$$5P(c'-4)+7P(c'-5)+8P(c'-6)+8P(c'-7)+8P(c'-8)+7P(c'-9)+$$
$$5P(c'-10)+4P(c'-11)+2P(c'-12)+P(c'-13)+P(c'-14).$$

\noindent But if $c$ is odd, then
$$P(c)=P(c')+2P(c'-1)+3P(c'-2)+5P(c'-3)+$$
$$6P(c'-4)+7P(c'-5)+8P(c'-6)+8P(c'-7)+7P(c'-8)+6P(c'-9)+$$
\begin{equation}
5P(c'-10)+3P(c'-11)+2P(c'-12)+P(c'-13),
\end{equation}
\noindent where $c'=\left[ \dfrac{c}{ 2}\right] $.

Hence
$$P(101)=P(50)+2P(49)+3P(48)+5P(47)+6P(46)+7P(45)+8P(44)+$$
$$8P(43)+7P(42)+6P(41)+5P(40)+3P(39)+2P(38)+P(37).$$\vskip+2mm

In a similar manner, representing $P(50)$, $P(49)$,..., $P(37)$ via $P(25)$, $P(24)$, ..., $P(0)$ with the aid of (5.6), we obtain that
$$P(101)=P(25)+6P(24)+20P(23)+47P(22)+91P(21)+$$
$$154P(20)+230P(19)+312P(18)+389P(17)+445P(16)
+471P(15)+$$
$$463P(14)+420P(13)+352P(12)+272P(11)+190P(10)+120P(9)+$$
\begin{equation}
67P(8)+31P(7)+12P(6)+3P(5).\end{equation}\vskip+2mm

Taking the fact that $P(0)=1$ into account, from formula (5.6) we obtain:\vskip+2mm

$P(1)=1$, $P(2)=2$, $P(3)=3$, $P(4)=5$, $P(5)=7$, $P(6)=11$, $P(7)=15$, $P(8)=21$, $P(9)=28$, $P(10)=38$, $P(11)=49$, $P(12)=65$, $P(13)=82$, $P(14)=105$, $P(15)=131$, $P(16)=164$, $P(17)=201$, $P(18)=248$, $P(19)=300$, $P(20)=364$, $P(21)=436$, $P(22)=522$, $P(23)=618$, $P(24)=733$, $P(25)=860$.\vskip+2mm

Substituting these values in formula (5.7), we obtain
$$P(101)=628998.$$
\end{exmp}
\vskip+2mm

\begin{exmp}
Find the number $P(9005)$ of 
integer non-negative solutions of the equation
$$x_1+5x_2+10x_3+10x_4+...+10x_n=9005,$$
\noindent where $n\geq 3$.

Here $\sum_{i=1}^{n}a_i=6+10(n-2)$, $M=10$, $M(n-1)-\sum_{i=1}^{n}a_i=10(n-1)-6-10(n-2)=4$, $r=5$ is the remainder of $9005$ modulo $10$. Since $r> M(n-1)-\sum_{i=1}^{n}a_i$, Corollary 3.4  implies
$$P(9005)=\dfrac{M^{n-1}}{a_1a_2\ldots a_n}C^{n-1}_{\left[\dfrac{9005}{10} \right]+(n-1)}$$
Since $\dfrac{M^{n-1}}{a_1a_2\ldots a_n}=2$, we obtain
$$P(9005)=2C_{900+n-1}^{n-1}.$$\vskip+2mm
For instance, for $n=4$ we obtain $P(9005)=2C_{903}^{3}=244623302$.
\end{exmp}\vskip+2mm

\begin{exmp}
Find the number $P(b)$ of integer non-negative solutions of the equation 
$$x_1+kx_2+kx_3+\ldots +kx_n=b,$$

\noindent where $k,n\in \mathbb{N}$, $n\geq 2$, $b$ is a non-negative integer. Here we have $\sum_{i=1}^{n}a_i=1+k(n-1)$, $M=k$, $s=0$. Therefore (3.14) implies
\begin{equation}
P(x_1+kx_2+kx_3+\ldots +kx_n=b)=C^{n-1}_{\left[\dfrac{b}{k} \right]+(n-1)}.
\end{equation}
\end{exmp}
\vskip+2mm

\begin{exmp} 
Find the number $P(b)$ of integer non-negative solutions of the equation 
\begin{equation}
ax_1+2x_2+2ax_3+2ax_4+\ldots +2ax_n=b,
\end{equation}
\noindent where $n\geq 2$, $a$ is a natural odd number which is different from $1$, and $b$ is a non-negative integer.


To investigate the function $P(b)$ we apply equalities (3.5), (3.2), (3.3), and (3.11). We have
$$s=\left[n-\dfrac{a+2+2a(n-2)+r}{2a}\right]=\left[2-\dfrac{a+2+r}{2a}\right],$$ where $r$ is the remainder of $b$ modulo $2a$.

This implies that if $r>a-2$, we have $s=0$, and according to Theorem 3.3, we obtain that
\begin{equation} 
P(b)=C^{n-1}_{\left[ \dfrac{b}{2a}\right] +n-1}.
\end{equation}
\noindent in that case.

If $r\leqslant a-2$, then $s=1$. Moreover, formula (3.11) implies that $l_0+l_1=1$. Therefore we can conclude that either $l_0=1$ and $l_1=0$, or $l_0=0$ and $l_1=1$. Since $l_0=P(r)$, from (3.5) we obtain
that if $r\leqslant a-2$, then
\begin{equation}
 P(b)=\left\{
\begin{array}{lll}
C_{\left[ \dfrac{b}{2a}\right] +n-1}^{n-1}& if & P(r)=1,\\
\\
\overline{C}_{\left[ \dfrac{b}{2a}\right] +n-2}^{n-1} & if & P(r)=0.
\end{array}
\right.
\end{equation}
\vskip+2mm

Since $P(r)=P(ax_1+2x_2=r)$, formulas (3.19), (3.16), (3.17), (3.18) and (5.11) imply that

\vskip+4mm


$$P(b)=\left\{
\begin{array}{llllllll}
C^{n-1}_{\left[\dfrac{b}{2a}\right]+n-1}&if&r>a-2,\\&or&r&is&even,\\
\\
\overline{C}^{n-1}_{\left[\dfrac{b}{2a}\right]+n-2}&otherwise.
\end{array}
\right.$$\vskip+2mm
\end{exmp}
\vskip+15mm

\vskip+3mm

\textit{Author's address:}

\textit{Eteri Samsonadze,
Retd., I. Javakhishvili Tbilisi State University,}

\textit{1 Tchavchavadze Av., Tbilisi, 0179, Georgia}, 

\textit{e-mail: eteri.samsonadze@outlook.com} 

\end{document}